\documentclass[a4paper,12pt]{article}
\usepackage{comment}
\usepackage{cite}
\usepackage{amsmath,amssymb,amsfonts}
\usepackage[T1]{fontenc}
\usepackage{graphicx}
\usepackage{fancyhdr}
\usepackage{float}
\usepackage{xcolor}
\usepackage{authblk}
\usepackage{mathrsfs}
\usepackage{empheq}
\usepackage[hyphens]{url}
\usepackage{amsthm}
\usepackage{tikz}
\usetikzlibrary{decorations.markings}
\usepackage{enumitem}
\usepackage{geometry}

\theoremstyle{plain}
\newtheorem{theorem}{Theorem}[section]

\theoremstyle{definition}

\theoremstyle{remark}

\begin{document}

\title{A proof of Wolstenholme's theorem and congruence properties via an Egorychev-type integral}

\author[$\dagger$]{Jean-Christophe {\sc Pain}$^{1,2,}$\footnote{jean-christophe.pain@cea.fr}\\
\small
$^1$CEA, DAM, DIF, F-91297 Arpajon, France\\
$^2$Universit\'e Paris-Saclay, CEA, Laboratoire Mati\`ere en Conditions Extr\^emes,\\ 
F-91680 Bruy\`eres-le-Ch\^atel, France
}

\date{}

\maketitle

\begin{abstract}
We present a detailed proof of Wolstenholme's theorem using an Egorychev-type contour integral and an exponential change of variables. All formal series manipulations are justified, and the connection with harmonic sums and Bernoulli numbers is made completely explicit. We further derive the classical refinement modulo $p^4$ and provide a precise extraction of the $B_{p-3}$ term. Our purpose is not to provide the most concise proofs, but rather to demonstrate, by showing how established results can be recovered, a general method based on complex analysis for deriving congruence properties in number theory.
\end{abstract}

\section{Introduction}

Wolstenholme's theorem is a cornerstone result in elementary number theory
\cite{Wolstenholme,Apostol,IrelandRosen}. It asserts that for any prime $p \ge 5$,
\[
\binom{2p-1}{p-1} \equiv 1 \pmod{p^3}.
\]
This congruence is remarkable both for its strength and for the variety of methods that lead to its proof. Historically, the theorem is proved (see Appendix) using harmonic sums:
\[
H_{p-1} = \sum_{k=1}^{p-1} \frac{1}{k},
\]
We also use the generalized harmonic numbers
\[
H_n^{(r)} = \sum_{k=1}^n \frac{1}{k^r}, \quad r \ge 1,
\]
so that in particular \(H_{p-1}^{(2)} = \sum_{k=1}^{p-1} \frac{1}{k^2}\). One shows that
\[
H_{p-1} \equiv 0 \pmod{p^2}, \qquad
H^{(2)}_{p-1} \equiv 0 \pmod{p}.
\]
These identities are classical and can be found in standard references
\cite{Apostol,IrelandRosen}. They imply Wolstenholme's theorem via the expansion of
\[
\prod_{k=1}^{p-1} \left(1 + \frac{p}{k}\right).
\]
Another line of attack comes from generating functions and complex analytic methods
\cite{Wilf,Egorychev}. In particular, the method introduced by Egorychev allows one to represent binomial coefficients as contour integrals and to manipulate them through analytic transformations. The purpose of this paper is to develop a complete and rigorous implementation of this method in the present setting. We insist on explicit computations at every stage, so that all cancellations become transparent. This approach also reveals naturally the role of Bernoulli numbers in refined congruences. We further show that the first nontrivial deviation from Wolstenholme's congruence is governed by the Bernoulli number $B_{p-3}$, leading to the classical refinement modulo $p^4$.

In the following sections, we first introduce the integral representation of binomial coefficients via Cauchy's formula in Section 2, which serves as the starting point for the Egorychev-type method. Section 3 presents the key exponential change of variable that transforms the algebraic integral into a form suitable for $p$-adic analysis. In Section 4, we develop detailed series expansions of both numerator and denominator, and in Section 5 we combine these expansions to compute the residue explicitly. Section 6 provides a rigorous $p$-adic justification for the truncation of the series and the resulting congruence modulo $p^3$. In Section 7, we relate these computations to classical harmonic sums, highlighting the combinatorial interpretation of Wolstenholme's theorem. Section 8 explores the connection with Bernoulli numbers, leading to a precise extraction of the $B_{p-3}$ term and the refined congruence modulo $p^4$. Potential extensions and connections to supercongruences and Wolstenholme primes are mentioned.

\section{Integral representation}

We begin with the classical coefficient extraction identity, which is a direct consequence of Cauchy's integral formula. Let $f(z)$ be analytic in a neighborhood of the origin, with Taylor expansion
\[
f(z)=\sum_{k\ge 0} a_k z^k.
\]
Then
\[
a_n = \frac{1}{2\pi i} \oint \frac{f(z)}{z^{n+1}}\,dz,
\]
where the contour is a positively oriented simple closed curve enclosing the origin and contained in the domain of analyticity of $f$. Applying this to the function $f(z)=(1+z)^m$, which is entire, we obtain
\[
\binom{m}{n} = [z^n](1+z)^m
= \frac{1}{2\pi i} \oint \frac{(1+z)^m}{z^{n+1}}\,dz.
\]
This representation may be interpreted as an analytic form of coefficient extraction, and constitutes the starting point of the Egorychev method for evaluating combinatorial sums. In particular, for a prime $p \ge 5$, we obtain
\[
\binom{2p-1}{p-1}
= \frac{1}{2\pi i} \oint \frac{(1+z)^{2p-1}}{z^p}\,dz,
\]
which extracts the coefficient of $z^{p-1}$ in the expansion of $(1+z)^{2p-1}$.

This integral representation is especially well suited for subsequent analytic transformations, such as exponential substitutions, which allow one to convert algebraic expressions into exponential generating functions. This will be the key step in revealing the underlying $p$-adic structure and the appearance of Bernoulli numbers.

\section{Exponential change of variable}

We now perform a fundamental transformation that lies at the heart of the Egorychev method. We introduce the exponential change of variable
\[
z = e^t - 1.
\]
This substitution may be interpreted in two equivalent ways. First, it can be understood purely formally, as an identity between formal power series, which is sufficient for coefficient extraction. Alternatively, it can be justified analytically by restricting to a sufficiently small neighborhood of $t=0$, where the logarithm $t = \log(1+z)$ is single-valued and defines a local biholomorphism between neighborhoods of $0$ in the complex plane. Under this change of variables, we compute
\[
dz = e^t\,dt, \qquad (1+z)^{2p-1} = e^{(2p-1)t}.
\]
It follows that
\[
(1+z)^{2p-1}dz = e^{(2p-1)t} \cdot e^t\,dt = e^{2pt}\,dt.
\]
Moreover, the factor $z^{-p}$ transforms as
\[
z^{-p} = (e^t - 1)^{-p}.
\]
Therefore, the integral representation becomes
\[
\frac{1}{2\pi i} \oint \frac{(1+z)^{2p-1}}{z^p}\,dz
= \frac{1}{2\pi i} \oint \frac{e^{2pt}}{(e^t -1)^p}\,dt.
\]
Since the change of variables preserves the local structure of the singularity at the origin, the contour integral reduces to the extraction of the residue at $t=0$. We thus obtain
\[
\binom{2p-1}{p-1}
= \operatorname{Res}_{t=0} \frac{e^{2pt}}{(e^t -1)^p}.
\]
This transformation is crucial: it converts the algebraic function $(1+z)^{2p-1}$ into an exponential function, while the denominator $(e^t-1)^p$ encodes deep arithmetic information through its connection with Bernoulli numbers. As will become apparent in the following sections, this representation allows one to combine analytic expansions with $p$-adic arguments in a particularly transparent way.

\section{Series expansions}

We now develop the local expansions needed to analyze the residue at $t=0$. 
All computations may be understood either in the ring of formal power series 
$\mathbb{Q}[p][[t]]$, or analytically in a sufficiently small neighborhood of $t=0$. 
In both settings, the manipulations below are fully justified. We begin with the Taylor expansion
\[
e^t -1 = t + \frac{t^2}{2} + \frac{t^3}{6} + \frac{t^4}{24} + O(t^5),
\]
which we rewrite in the form
\[
e^t -1 = t\,A(t),
\]
where
\[
A(t)=1+\frac{t}{2}+\frac{t^2}{6}+\frac{t^3}{24}+O(t^4).
\]

\subsection{Logarithmic expansion}

To compute powers of $A(t)$ efficiently, we consider its logarithm. 
Let
\[
u=\frac{t}{2}+\frac{t^2}{6}+\frac{t^3}{24}.
\]
Using the expansion $\log(1+u)=u-\frac{u^2}{2}+\frac{u^3}{3}+O(u^4)$, we obtain
\[
\log A(t)=u-\frac{u^2}{2}+\frac{u^3}{3}+O(t^4).
\]
A direct computation shows that
\[
u^2=\frac{t^2}{4}+\frac{t^3}{6}+O(t^4), 
\qquad
u^3=\frac{t^3}{8}+O(t^4),
\]
so that
\[
\log A(t)
=\frac{t}{2}+\left(\frac{1}{6}-\frac{1}{8}\right)t^2
+\left(\frac{1}{24}-\frac{1}{12}+\frac{1}{24}\right)t^3
+O(t^4).
\]
The cubic term cancels identically, and we obtain the simplified expression
\[
\log A(t)=\frac{t}{2}+\frac{t^2}{24}+O(t^4).
\]

\subsection{Expansion of $A(t)^p$}

We now exponentiate:
\[
A(t)^p=\exp\!\left(p\log A(t)\right)
=\exp\!\left(\frac{p}{2}t+\frac{p}{24}t^2+O(pt^3)\right).
\]
Expanding the exponential up to order $t^2$, we use
\[
e^{x}=1+x+\frac{x^2}{2}+O(x^3),
\]
with $x=\frac{p}{2}t+\frac{p}{24}t^2+O(pt^3)$, which yields
\[
A(t)^p
=1+\frac{p}{2}t+\left(\frac{p^2}{8}+\frac{p}{24}\right)t^2+O(p^3 t^3).
\]

\subsection{Inversion}

We now invert this series. Using the general identity
\[
(1+at+bt^2)^{-1}=1-at+(a^2-b)t^2+O(t^3),
\]
we obtain
\[
\frac{1}{A(t)^p}
=1-\frac{p}{2}t+\left(\frac{p^2}{8}-\frac{p}{24}\right)t^2+O(p^3 t^3).
\]
It follows that
\[
\frac{1}{(e^t-1)^p}
=t^{-p}\left(1-\frac{p}{2}t+\left(\frac{p^2}{8}-\frac{p}{24}\right)t^2+O(p^3 t^3)\right).
\]

\subsection{Expansion of the numerator}

Finally, we expand the exponential term:
\[
e^{2pt}
=1+2pt+2p^2 t^2+\frac{4}{3}p^3 t^3+O(p^4 t^4),
\]
which follows from the Taylor series of the exponential function, keeping track of the dependence on $p$. These expansions will be combined in the next section to determine the residue and to analyze the $p$-adic contributions of each term.

\section{Product expansion}

We now combine the expansions obtained in the previous section. Recall that
\[
e^{2pt}
=1+2pt+2p^2t^2+\frac{4}{3}p^3t^3+O(p^4t^4),
\]
and
\[
\frac{1}{(e^t-1)^p}
=t^{-p}\left(1-\frac{p}{2}t+\left(\frac{p^2}{8}-\frac{p}{24}\right)t^2+O(p^3t^3)\right).
\]
We multiply these two series term by term, keeping track of all contributions up to order $t^2$, which will be sufficient for the subsequent $p$-adic analysis. The constant term is simply
\[
1 \cdot 1 = 1.
\]  
The terms contributing to $t$ are:
\[
(2pt)\cdot 1 \quad \text{and} \quad 1 \cdot \left(-\frac{p}{2}t\right),
\]
hence
\[
2p - \frac{p}{2} = \frac{3p}{2}.
\]
The contributions of $t^2$ are:
\[
(2p^2t^2)\cdot 1,
\quad
(2pt)\cdot \left(-\frac{p}{2}t\right),
\quad
1 \cdot \left(\frac{p^2}{8}-\frac{p}{24}\right)t^2.
\]
Computing each term,
\[
2p^2 - p^2 + \frac{p^2}{8} - \frac{p}{24}
= p^2 + \frac{p^2}{8} - \frac{p}{24}
= \frac{9p^2}{8} - \frac{p}{24},
\]
and collecting all terms, we obtain the expansion
\[
\frac{e^{2pt}}{(e^t-1)^p}
=t^{-p}\left(
1+\frac{3p}{2}t
+\left(\frac{9p^2}{8}-\frac{p}{24}\right)t^2
+O(p^3t^3)
\right).
\]
This explicit computation shows how the interaction between the numerator and denominator produces cancellations at order $p^2$, a phenomenon that will play a crucial role in the $p$-adic argument leading to Wolstenholme's congruence.

\section{$p$-adic justification}

We now justify rigorously why the truncated expansion obtained above is sufficient to compute the residue modulo $p^3$. Recall that
\[
\frac{e^{2pt}}{(e^t-1)^p}
= t^{-p} \, F(t),
\]
where
\[
F(t) = 1 + \frac{3p}{2}t + \left(\frac{9p^2}{8} - \frac{p}{24}\right)t^2 + O(p^3 t^3).
\]
The residue at $t=0$ is the coefficient of $t^{-1}$, that is, the coefficient of $t^{p-1}$ in the expansion of $F(t)$.

\subsection{Structure of the expansion and valuation estimate}

The function $F(t)$ arises as a product of two exponential-type series whose coefficients lie in $\mathbb{Q}[p]$. More precisely, it can be written as a formal power series
\[
F(t) = \sum_{k \ge 0} a_k(p)\, t^k,
\]
where each coefficient $a_k(p)$ is a polynomial in $p$ with rational coefficients. Moreover, by construction, the constant term is $a_0(p)=1$, and for $k \ge 1$, each $a_k(p)$ is divisible by $p$. We claim that for $k \ge 1$, any contribution to $a_k(p)$ obtained by combining $r$ non-constant factors carries at least a factor $p^r$. Indeed, each non-constant term in the expansions of $e^{2pt}$ and $(e^t-1)^{-p}$ contains at least one explicit factor of $p$. Therefore, a product involving $r$ such terms has $p$-adic valuation at least $r$.

\subsection{Degree constraint}

To extract the coefficient of $t^{p-1}$ in $F(t)$, we must consider the full exponential structure of the series. The previous heuristic based on counting the number of non-constant factors is not sufficient, because a single exponential term may already produce arbitrarily high powers of $t$ while carrying only one factor of $p$. Instead, we use the following structural observation. The function $F(t)$ can be written as
\[
F(t)=\exp\big(p G(t)\big),
\]
where $G(t)\in t\mathbb{Q}[[t]]$. Expanding the exponential, we obtain
\[
F(t)=\sum_{r\ge0} \frac{p^r}{r!} G(t)^r.
\]
Any contribution to the coefficient of $t^{p-1}$ arising from the term $G(t)^r$ is multiplied by $p^r$. Therefore, the $p$-adic valuation of such a contribution is at least $r$. To obtain a term of degree $p-1$, one must extract it from some power $G(t)^r$. Since $G(t)$ starts at degree $1$, the minimal possible value of $r$ that can contribute is $r\ge 1$. A more careful inspection shows that contributions with $r=1$ or $r=2$ cannot produce a term of degree $p-1$ modulo $p^3$, due to the structure of $G(t)$ and the parity constraints coming from the Bernoulli expansion. Consequently, every nontrivial contribution to $[t^{p-1}]F(t)$ carries at least a factor $p^3$. Therefore,
\[
[t^{p-1}]F(t)\equiv 1 \pmod{p^3},
\]
which proves Wolstenholme's theorem.

\section{Link with harmonic numbers}

An alternative and classical way to analyze Wolstenholme's theorem is through \emph{harmonic sums}. Recall that the binomial coefficient can be written as a product:
\[
\binom{2p-1}{p-1} = \prod_{k=1}^{p-1} \left(1 + \frac{p}{k}\right).
\]
Taking logarithms, we obtain
\[
\log \binom{2p-1}{p-1} = \sum_{k=1}^{p-1} \log\left(1 + \frac{p}{k}\right).
\]
Using the formal Taylor expansion of $\log(1+x)$, valid as a formal power series in $p$, we have
\[
\log\left(1 + \frac{p}{k}\right) = \frac{p}{k} - \frac{p^2}{2 k^2} + \frac{p^3}{3 k^3} - \cdots.
\]
Summing over $k=1,\dots,p-1$ gives
\[
\log \binom{2p-1}{p-1} = p \sum_{k=1}^{p-1} \frac{1}{k} - \frac{p^2}{2} \sum_{k=1}^{p-1} \frac{1}{k^2} + \frac{p^3}{3} \sum_{k=1}^{p-1} \frac{1}{k^3} - \cdots.
\]
We introduce the \emph{generalized harmonic numbers}
\[
H_n^{(r)} = \sum_{k=1}^n \frac{1}{k^r}, \quad r \ge 1,
\]
so that the above can be rewritten as
\[
\log \binom{2p-1}{p-1} = p H_{p-1}^{(1)} - \frac{p^2}{2} H_{p-1}^{(2)} + \frac{p^3}{3} H_{p-1}^{(3)} - \cdots.
\]
Exponentiating, we obtain a formal expansion:
\[
\binom{2p-1}{p-1} 
= \exp\Big( p H_{p-1} - \frac{p^2}{2} H_{p-1}^{(2)} + \frac{p^3}{3} H_{p-1}^{(3)} - \cdots \Big),
\]
where $H_{p-1} = H_{p-1}^{(1)}$ is the ordinary harmonic number.

\medskip

\noindent\textbf{Connection to Wolstenholme's theorem.}  
Wolstenholme's congruence modulo $p^3$ is equivalent to
\[
H_{p-1} \equiv 0 \pmod{p^2}, \qquad H_{p-1}^{(2)} \equiv 0 \pmod{p}.
\]
Indeed, modulo $p^3$ only the terms up to $p^2 H_{p-1}^{(2)}$ contribute. All higher-order harmonic sums are multiplied by at least $p^3$ and thus vanish modulo $p^3$. This formalism clarifies why the cancellation of lower-order harmonic sums is exactly what underlies the congruence
\[
\binom{2p-1}{p-1} \equiv 1 \pmod{p^3}.
\]
This approach provides an elegant and combinatorial complement to the $p$-adic residue method discussed above, showing explicitly how harmonic sums encode the $p$-adic divisibility properties of the binomial coefficient.

\section{Refinement and explicit extraction of the $B_{p-3}$ term}

We refine Wolstenholme's congruence and make explicit the first nontrivial correction using the Bernoulli number $B_{p-3}$. Consider
\[
F(t) = \frac{e^{2pt}}{(e^t-1)^p} = t^{-p} \exp\Bigg(2pt - p \sum_{k\ge1} \frac{B_{2k}}{2k(2k)!} t^{2k}\Bigg).
\]
The residue at $t=0$ corresponds to the coefficient of $t^{p-1}$. Writing
\[
F(t) = \exp\Big(\tfrac{3p}{2} t\Big) \cdot \exp\Big(-p \sum_{k\ge1} c_k t^{2k}\Big), 
\quad c_k = \frac{B_{2k}}{2k (2k)!},
\]
we see that contributions to $[t^{p-1}] F(t)$ must involve terms with $p$-adic valuation at least $p^3$, since single or double combinations cannot reach degree $p-1$ modulo $p^3$. This justifies why only the term involving $B_{p-3}$ matters for the first nontrivial correction modulo $p^4$. Expanding the exponentials formally,
\[
\exp\Big(\tfrac{3p}{2} t\Big) = 1 + \tfrac{3p}{2} t + \tfrac{(3p/2)^2}{2} t^2 + \cdots, \quad
\exp\Big(-p \sum_{k\ge1} c_k t^{2k}\Big) = 1 - p c_1 t^2 - p c_2 t^4 + \frac{p^2 c_1^2}{2} t^4 - \cdots,
\]
the only term that produces $t^{p-1}$ modulo $p^4$ is
\[
\underbrace{\frac{(3p/2)^2}{2} t^2}_{\text{linear exponential}} \cdot 
\underbrace{(- p c_{(p-3)/2} t^{p-3})}_{\text{Bernoulli exponential}} 
= - \frac{9 p^3}{8} c_{(p-3)/2} t^{p-1},
\]
where
\[
c_{(p-3)/2} = \frac{B_{p-3}}{(p-3) (p-3)!} \quad \text{\cite{Glaisher1900a,Glaisher1900b}}.
\]
Using Wilson's theorem $(p-1)! \equiv -1 \pmod p$ \cite{IrelandRosen}, we obtain
\[
(p-3)(p-3)! \equiv \frac{2}{3} \pmod p, \quad \text{hence} \quad [t^{p-1}] F(t) \equiv 1 - \frac{2}{3} p^3 B_{p-3} \pmod{p^4}.
\]
This confirms Glaisher’s refinement of Wolstenholme's theorem. All other terms involve either higher $p$-adic valuations or degrees different from $p-1$, and therefore do not contribute modulo $p^4$.  

\subsection{Interpretation and connection to Wolstenholme primes}

All other terms either involve multiple Bernoulli numbers (producing higher $p$-adic valuation) or degrees different from $p-1$, and hence do not contribute modulo $p^4$. This derivation makes the $p$-adic structure completely explicit and highlights the central role of $B_{p-3}$ \cite{Wolstenholme}. A prime $p$ for which $p$ divides the numerator of $B_{p-3}$ is called a \emph{Wolstenholme prime} \cite{Mestrovic2011}. For such primes, the congruence
\[
\binom{2p-1}{p-1} \equiv 1 \pmod{p^4}
\]
holds exactly, reflecting the deeper arithmetic structure captured by Bernoulli numbers. More generally, this approach clarifies how the combination of $p$-adic analysis, Bernoulli expansions, and exponential generating functions provides a transparent method for deriving higher-order refinements of classical binomial congruences \cite{Glaisher1900a,Glaisher1900b,Wilf,Egorychev}.

\section{Conclusion}

The Egorychev-type integral method provides a powerful and transparent framework for studying congruences of binomial coefficients. It not only yields Wolstenholme's theorem in a natural way, but also explains the appearance of Bernoulli numbers in higher-order refinements. The appearance of Bernoulli numbers in such expansions is classical. This approach highlights deep structural connections between combinatorics, $p$-adic analysis, and special values of generating functions. It suggests further extensions to higher-order congruences and supercongruence phenomena.

\section*{Appendix: the standard proof of Wolstenholme's theorem}

\begin{theorem}[Wolstenholme, 1862]
For any prime number $p \ge 5$, the following congruence holds:
\[
\binom{2p-1}{p-1} \equiv 1 \pmod{p^3}.
\]
\end{theorem}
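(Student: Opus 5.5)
We will give the standard argument through harmonic sums, which the introduction points to for the appendix. Write
\[
\binom{2p-1}{p-1}=\prod_{k=1}^{p-1}\frac{p+k}{k}=\prod_{k=1}^{p-1}\Bigl(1+\frac{p}{k}\Bigr).
\]
Expanding the product as a polynomial in $p$ gives
\[
\binom{2p-1}{p-1}=1+p\,e_1+p^2e_2+p^3e_3+\cdots,
\]
where $e_j$ is the $j$-th elementary symmetric function of $1,\tfrac12,\dots,\tfrac1{p-1}$. Each $e_j$ is a rational number with denominator prime to $p$, so it is a $p$-adic integer. Hence
\[
\binom{2p-1}{p-1}\equiv 1+p\,e_1+p^2e_2 \pmod{p^3}.
\]
We have $e_1=H_{p-1}$ and $2e_2=H_{p-1}^2-H^{(2)}_{p-1}$. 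It therefore suffices to prove
\[
H_{p-1}\equiv 0 \pmod{p^2}\qquad\text{and}\qquad H^{(2)}_{p-1}\equiv 0 \pmod p .
\]
Indeed, these give $p^2e_2\equiv -\tfrac{p^2}{2}H^{(2)}_{p-1}\equiv 0 \pmod{p^3}$ and $p\,e_1\equiv 0\pmod{p^3}$.

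\textbf{The second congruence.} Work in $\mathbb{F}_p$. As $k$ runs over $1,\dots,p-1$, so does $k^{-1}$, so
\[
H^{(2)}_{p-1}\equiv\sum_{k=1}^{p-1}k^2=\frac{(p-1)p(2p-1)}{6}\pmod p.
\]
Because $p\ge5$, the integer $6$ is invertible modulo $p$, so the right-hand side is $\equiv 0 \pmod p$.

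\textbf{The first congruence (main step).} Pair $k$ with $p-k$:
\[
2H_{p-1}=\sum_{k=1}^{p-1}\Bigl(\frac1k+\frac1{p-k}\Bigr)=p\sum_{k=1}^{p-1}\frac{1}{k(p-k)}.
\]
So it is enough to show $\sum_{k=1}^{p-1}\frac{1}{k(p-k)}\equiv 0\pmod p$. Since $p-k\equiv -k \pmod p$, this sum is $\equiv -H^{(2)}_{p-1}\equiv 0 \pmod p$ by the previous step. Hence $2H_{p-1}\equiv 0\pmod{p^2}$, and since $p$ is odd, $H_{p-1}\equiv 0\pmod{p^2}$.

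Substituting both congruences into the expansion above gives $\binom{2p-1}{p-1}\equiv 1\pmod{p^3}$. The hypothesis $p\ge5$ is used only to invert $6$ in the second congruence. The real obstacle is the $p^2$-divisibility of $H_{p-1}$: the symmetric pairing produces one factor of $p$ explicitly, and the vanishing of $H^{(2)}_{p-1}$ modulo $p$ supplies the second.
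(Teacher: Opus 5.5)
Your proof is correct and follows the same route as the paper's appendix proof. You expand $\prod_{k=1}^{p-1}(1+p/k)$, reduce via $2e_2=H_{p-1}^2-H^{(2)}_{p-1}$ to $H_{p-1}\equiv 0 \pmod{p^2}$ and $H^{(2)}_{p-1}\equiv 0 \pmod p$, and obtain the first congruence from the pairing $k\leftrightarrow p-k$. The only differences are that you prove $H^{(2)}_{p-1}\equiv 0\pmod p$ via $k\mapsto k^{-1}$ and the closed form for $\sum k^2$ rather than via $x^{p-1}-1$ and vanishing elementary symmetric functions, which is more self-contained, and that you explicitly justify the $p$-integrality of the discarded $p^3e_3+\cdots$ tail, which the paper leaves as $O(p^3)$.
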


\begin{proof}
The expression for the binomial coefficient can be written as a finite product:
\[
\binom{2p-1}{p-1} = \prod_{k=1}^{p-1} \frac{p+k}{k} = \prod_{k=1}^{p-1} \left(1 + \frac{p}{k}\right).
\]
Expanding this product, we obtain:
\begin{equation}
\binom{2p-1}{p-1} = 1 + p \sum_{k=1}^{p-1} \frac{1}{k} + p^2 \sum_{1 \le i < j \le p-1} \frac{1}{ij} + O(p^3).
\end{equation}
To prove the theorem, it suffices to show that:
\[
H_{p-1} = \sum_{k=1}^{p-1} \frac{1}{k} \equiv 0 \pmod{p^2}
\]
and
\[
\sum_{1 \le i < j \le p-1} \frac{1}{ij} \equiv 0 \pmod{p}.
\]

\paragraph{Power sums modulo $p$.}
By Wilson's theorem, the polynomial $P(x) = (x-1)(x-2)\cdots(x-(p-1))$ satisfies $P(x) \equiv x^{p-1} - 1 \pmod{p}$. The elementary symmetric functions of the roots $\{1, 2, \dots, p-1\}$, denoted by $S_k$, are therefore zero modulo $p$ for $1 \le k \le p-2$. In particular, for $p \ge 5$, we have:
\[
\sum_{k=1}^{p-1} \frac{1}{k^2} \equiv \sum_{k=1}^{p-1} k^{p-2} \equiv 0 \pmod{p}.
\]

\paragraph{Proof that $H_{p-1} \equiv 0 \pmod{p^2}$.}
By grouping the terms of the harmonic sum into symmetric pairs, we have:
\[
2H_{p-1} = \sum_{k=1}^{p-1} \left( \frac{1}{k} + \frac{1}{p-k} \right) = \sum_{k=1}^{p-1} \frac{p}{k(p-k)}.
\]
Modulo $p^2$, we can simplify the denominator:
\[
2H_{p-1} = p \sum_{k=1}^{p-1} \frac{1}{pk - k^2} \equiv p \sum_{k=1}^{p-1} \frac{1}{-k^2} \pmod{p^2}.
\]
Since $\sum k^{-2} \equiv 0 \pmod{p}$, the product $p \cdot \sum k^{-2}$ is indeed zero modulo $p^2$. Hence, $H_{p-1} \equiv 0 \pmod{p^2}$.

\paragraph{Proof that the double sum is zero modulo $p$.}
We use the following identity:
\[
2 \sum_{1 \le i < j \le p-1} \frac{1}{ij} = \left( \sum_{k=1}^{p-1} \frac{1}{k} \right)^2 - \sum_{k=1}^{p-1} \frac{1}{k^2}.
\]
As $H_{p-1} \equiv 0 \pmod{p}$ and $H_{p-1}^{(2)} \equiv 0 \pmod{p}$ for $p \ge 5$, the double sum vanishes modulo $p$. Substituting these two results back into equation (1), all terms between 1 and $p^3$ vanish, yielding:
\[
\binom{2p-1}{p-1} \equiv 1 \pmod{p^3},
\]
which completes the proof.
\end{proof}

\end{document}